\documentclass[graybox]{SNmult}

\usepackage{type1cm}        % activate if the above 3 fonts are
\usepackage{makeidx}         % allows index generation
\usepackage{graphicx}        % standard LaTeX graphics tool
\usepackage{multicol}        % used for the two-column index
\usepackage[bottom]{footmisc}% places footnotes at page bottom

\usepackage{newtxtext}       % 
\usepackage{enumitem}
\usepackage{tikz}
\usepackage[european]{circuitikz}
\usepackage[varvw]{newtxmath}       % selects Times Roman as basic font

\makeindex             % used for the subject index
\usepackage{bbm}
\begin{document}
%\motto{This is my motto.}
\title*{Structure-Preserving Operator Splitting via JR-Decomposition for Circuit Models}
% Use \titlerunning{Short Title} for an abbreviated version of
% your contribution title if the original one is too long
\author{Andreas Bartel and Malak Diab} %\orcidID{1111-2222-3333-4444}}
% Use \authorrunning{Short Title} for an abbreviated version of
% your contribution title if the original one is too long
\institute{Andreas Bartel, Malak Diab \at IMACM, School of Mathematics, University of Wuppertal, Gau\ss{}str. 20, D-42119 Wuppertal, Germany \email{bartel@uni-wuppertal.de, mdiab@uni-wuppertal.de}}
%\and name \at Name, Address of Institute \email{name@email.address}}
%
% Use the package "url.sty" to avoid
% problems with special characters
% used in your e-mail or web address
%
\maketitle
\abstract*{We investigate circuit models, namely, modified nodal analysis (MNA) in the  port-Hamiltonian framework. Based on this, the JR-decomposition for the numerical treatment would offer an energy conform splitting. However, for circuit models, the application of the standard JR-decomposition is restricted. To enable a JR-decomposition for MNA, we need to relax the decomposition. To this end, we introduce the enhanced JR-decomposition, which is particularly tailored to the application to circuits. We conclude with numerical example that illustrates the applicability of the proposed approach as well as its convergence and structure-preserving properties.
\keywords{DAE $\cdot$ JR-decomposition $\cdot$ Operator splitting $\cdot$ Circuit modeling $\cdot$ Port-Hamiltonian systems $\cdot$ Structure-preserving methods}}

\abstract{We investigate circuit models, namely, modified nodal analysis (MNA) in the  port-Hamiltonian framework. Based on this, the JR-decomposition for the numerical treatment would offer an energy conform splitting. However, for circuit models, the application of the standard JR-decomposition is restricted. To enable a JR-decomposition for MNA, we need to relax the decomposition. To this end, we introduce the enhanced JR-decomposition, which is particularly tailored to the application to circuits. We conclude with numerical example that illustrates the applicability of the proposed approach as well as its convergence and structure-preserving properties.
\keywords{DAE $\cdot$ JR-decomposition $\cdot$ Operator splitting $\cdot$ Circuit modeling $\cdot$ Port-Hamiltonian systems $\cdot$ Structure-preserving methods}}

\section{Introduction}
\label{sec:intro}

In the simulation and analysis of electric circuits, structure-preserving formulations have gained increasing attention \cite{Mehrmann_2019a,Gernandt_2021,ECMI_2023} due to their ability to reflect the physical characteristics at both the continuous and discrete levels. Within this context, the port-Hamiltonian (pH) framework facilitates the modeling and the simulation of dynamical systems \cite{Mehrmann_2019a} with an energy balance.

To solve such systems efficiently while maintaining their structural properties, operator splitting methods are a promising approach \cite{Strang68}. In the prior work \cite{Bartel_2025}, the introduced JR-decomposition separates the pH system into conserved and dissipative subsystems. However, for electric circuit models, the applicability of the standard JR-decomposition remains restrictive, for instance due to the presence of voltage sources.

In this work, we introduce an enhanced JR-decomposition for electric circuit differential-algebraic equations (DAEs) that extends the applicability of the method to general lumped circuit topologies. We consider the MNA  \cite{Guenther1999} formulation and demonstrate the applicability of the proposed approach.

This paper is structured as follows: we introduce the pH framework and JR-decomposition in Section~\ref{sec2}. In Section~\ref{sec:circuitmodel_and_PH}, this decomposition is transferred to MNA models and enhanced to enable usage for a broader class of circuits. Finally, numerical results are presented (Section~\ref{sec:numerics}).

%----------------------------------------------------------------
\section{PH formulation and JR-decomposition}\label{sec2}
%\section{Modeling electric circuits and pH formulation}\label{sec2}
%----------------------------------------------------------------
Port-Hamiltonian (pH) systems are network descriptions, which include  energy routing, energy dissipation and energy input with collocated output. This class is closed under the pH coupling of these systems. Therefore, it serves as a coupling framework also for systems from various physical domains, e.g. electric circuit models or multibody systems. Thus, the prime property of pH systems is the energy conservation/dissipation. Very naturally, this yields differential-algebraic equations (DAEs) in time domain.

%------------------------------------------------------------------
\subsection{Linear port-Hamiltonian systems}
%------------------------------------------------------------------

For linear models, this amounts to the following description: given a time interval $I\subseteq \mathbbm{R}$
and a state variable $x: I \to \mathbbm{R}^n$, we consider linear pH-DAEs of the form
\begin{align}
  \label{eq:pHDAE}
    E \dot x &= (J-R)x + Bu(t), \quad x(0)=x_0 \\
    y &= B^\top x \nonumber
\end{align}
with matrices $J=-J^\top$, $R=R^\top \geq 0$, and $E=E^\top \geq 0$ from $\mathbbm{R}^{n\times n}$, as well as $B\in\mathbbm{R}^{n\times m}$, input $u(t)\in \mathbbm{R}^m$, collocated output $y(t)\in \mathbbm{R}^m$ and (consistent) initial value $x_0\in\mathbbm{R}^n$. Moreover, the solution $x$ of DAE \eqref{eq:pHDAE} satisfies the dissipation inequality
\[
\frac{d}{dt}\mathcal{H}(x(t)) \leq y(t)^\top u(t), \quad \mathcal{H}(x) = \frac{1}{2}x^\top E x. 
\]
For constant coefficient case of \eqref{eq:pHDAE}, the DAE index is at most 2, see~\cite{Mehl_2018}.

%----------------------------------------------------------------
\subsection{JR-decomposition for ODEs and DAEs}
\label{sec:classical-JR}
%----------------------------------------------------------------
The JR-decomposition is a structural splitting approach for pH-DAEs. It was introduced and discussed in \cite{Bartel_2025}. 
%The present work builds upon that previous paper. 
This decomposition enables tailored, structure-preserving numerical schemes. %First, we review the JR-decomposition for ordinary differential equations (ODEs) and DAEs, before proposing the enhanced version.
In fact, the port-Hamiltonian system \eqref{eq:pHDAE} can be decomposed as follows:
\begin{equation}\label{eq:pHS-splitting-basis}
E \dot x = f  \quad \text{as} \quad
\left\{ \begin{array}{l} 
        E \dot x = f_1  \\
        E \dot x = f_2  
        \end{array}
        \right\}
        \quad \text{with} \quad 
        \begin{array}{l}
        f_1 = Jx, \\
        f_2 = -Rx+Bu.
        \end{array}
\end{equation}
Thus $f_1$ states the energy-conserving part, whereas $f_2$ gives the dissipative contribution with sources. 
%using 
%\begin{enumerate}[label=(\roman*)]
%    \item energy-conserving part: $f_1 = Jx$,
%    \item dissipative part with sources: $f_2 = -Rx+Bu$.
%\end{enumerate}
%Thus, 
This is the so-called \emph{JR-decomposition}. 
It enables the use of energy conserving methods for the first part with $f_1$ and dissipative/general techniques for the second part.
\\[5pt]
\textbf{Index-0.} If $E>0$, system \eqref{eq:pHDAE} is an ODE system and standard theory for splitting applies. In this case, the JR-decomposition is well-defined.
\\[5pt]
\textbf{Index-1.} Here, $E$ is not regular. Thus we have two subsystems with at least one DAE subsystem.  Standard DAE decoupling techniques double algebraic constraints, see \cite{ECMI_2023, Bartel_2025}. A well-posed splitting strategy can be constructed using the dissection approach from \cite{Jansen2015A}, which separates differential and algebraic components. Depending on the structure of the pH-DAE, the JR-decomposition can occur in two cases:
\begin{enumerate}[label=(\alph*)]
    \item \emph{$R, B$ contribute only to the differential part, or}
    \item \emph{$J$ contributes only to the differential part.}
\end{enumerate}
In both cases, the resulting decomposition yields two subsystems associated with the conservative and dissipative parts, respectively; for details see \cite{Bartel_2025}. The corresponding splitting can then be integrated using Strang splitting, which preserves second-order convergence.
\\[5pt]
\textbf{Index-2.} This case is more challenging, since there are hidden algebraic constraints. In the present work, we restrict ourselves to index-1 formulations. 
\vspace*{5pt}

Next, we discuss the application of JR-decomposition to circuits. 

%------------------------------------------------------
\section{PH modeled electric circuit and JR-decomposition}
\label{sec:circuitmodel_and_PH}
%------------------------------------------------------

We consider linear electric networks with two-terminal elements: capacitances ($C$), inductances ($L$), resistances ($G$), independent current sources $\imath_S(t)$, and independent voltage sources $v_S(t)$, for short linear RLC-networks with inputs. Using modified nodal analysis (MNA), \cite{Guenther1999} these circuits can be represented as port-Hamiltonian system.

%----------------------------------------------------------------
\subsection{MNA circuit model and pH system}
\label{subsec:mna}
%----------------------------------------------------------------

In what follows, the matrix $A$ denotes the incidence matrix and is partitioned according to the circuit elements of our RLC-network as  $A=[A_C, A_L , {A_G}, A_I, A_V]$. %We additionally note that pH-DAEs of the form \eqref{eq:pHDAE} have at most index-2 (see \cite{Mehrmann_2019a}).
Using the standard MNA formulation, the circuit equations can be written as the linear pH-DAE
\begin{align}\label{eq:mna-model}
&\underbrace{\begin{pmatrix}
    A_C C A_C^\top & 0 & 0 \\
    0 & L & 0 \\
    0& 0 & 0 
\end{pmatrix}}_{\displaystyle E:=}
\begin{pmatrix}
     \dot{e} \\  \dot \imath_L \\  \dot \imath_V
\end{pmatrix}
%\\
% & \qquad 
 =
\Biggl(
\underbrace{\begin{pmatrix}
    0 & - A_L & -A_V \\
    A_L^ \top & 0 & 0 \\
    A_V^\top & 0 &0 
\end{pmatrix}}_{\displaystyle J:=} -
\underbrace{\begin{pmatrix}
    {A_G} G {A_G^\top} & 0 & 0\\
    0 & 0 & 0 \\
    0 & 0 & 0
\end{pmatrix}}_{\displaystyle R:=}\Biggl)
\begin{pmatrix}
    e \\ \imath_L \\  \imath_V
\end{pmatrix}  
%\\ 
%& \qquad\quad
+ 
\underbrace{\begin{pmatrix}
    -A_I & \phantom{-}0 \\ \phantom{-}0 & \phantom{-}0 \\ \phantom{-}0 & -I
\end{pmatrix}}_{\displaystyle B:=}
\underbrace{\begin{pmatrix}
    \imath_S(t)  \\ v_S(t)
\end{pmatrix}}_{u(t):=}
\end{align}
equipped with the output relation
\[
y = \begin{pmatrix} -A_I^\top & \phantom{-}0 & \phantom{-}0 \\
                    \phantom{-}0    & \phantom{-}0 &-I \end{pmatrix} x 
                    = \begin{pmatrix} - A_I^\top e \\ - \imath_V \end{pmatrix}.
\]
The state variables collected in $x^\top =\bigl(e^\top,\, \jmath_L^\top,\,  \jmath_V^\top\bigr)$ are: node potentials $e$, currents through inductances $\imath_L$ and currents through voltage sources $\imath_V$. Notice, we use $\dot \imath(t)=\frac{\text{d}}{\text{d}t}\imath (t)$. 
As above, the linear system \eqref{eq:mna-model} has maximal index-2; see also \cite{Mehrmann_2019a}. 

In fact, we have index-0 (an ODE) if $A_C^\top$ has full rank and there are no voltage sources. The system is of index-1 if there exists neither a $CV^{+}$-loop (i.e., a loop of capacitors and voltage sources with at least one voltage source) nor an $LI$-cutset (i.e., a cutset composed by inductors and currents sources only). Otherwise, the system is of index-2; see \cite{Tischendorf98}.
We note that index-2 circuit systems can be treated in the loop-cutset framework where index-2 variables are merely computed in a post-processing, see~\cite{OSM_DAE}.

%----------------------------------------------------------------
%\section{Enhanced JR-decomposition}
%\label{sec:enhanced-JR}
%----------------------------------------------------------------

%%%%%%%%%%%%%%%%%%%%%%%%%%%%%
%----------------------------------------------------------------
\subsection{JR-decomposition for index-1 MNA models}
\label{sec:circuits-and-JRsplitting}
%----------------------------------------------------------------
Now, we analyze the index-1 case of MNA equations in the context of the JR-decomposition. We denote by $K_E$ the projector  onto $\ker(E)$, which reads for the MNA equations: $K_E=\text{blkdiag}(K_C, 0, I)$, where $K_C$ denotes a projector onto $\ker (A_C^\top)$.
\begin{lemma}
    The JR-decomposition of version (a) (see Section~\ref{sec:classical-JR})
     can not be applied to index-1 network models stemming from MNA. Moreover, both versions exclude networks with voltage sources.
\end{lemma}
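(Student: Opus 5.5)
Everything here reduces to the block structure of $J$, $R$, $B$ in \eqref{eq:mna-model} together with the kernel projector $K_E=\mathrm{blkdiag}(K_C,0,I)$. I read the two versions of Section~\ref{sec:classical-JR} as conditions on the algebraic part $\ker E$. Version (a) asks that $R$ and $B$ act only on the differential part, i.e.\ $K_E R=0$ and $K_E B=0$, so the algebraic equations come from $J$ alone. Version (b) asks that $J$ acts only on the differential part, i.e.\ $K_E J=0$, so the algebraic equations come from $-Rx+Bu$ alone. The plan is to write out $K_E J$, $K_E R$ and $K_E B$ blockwise and read off what each condition forces on the incidence matrices.

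\textbf{Step 1 (voltage sources vs.\ version (a)).} Compute
\[
K_E B=\begin{pmatrix}-K_C A_I & 0\\ 0&0\\ 0&-I\end{pmatrix}.
\]
If there is at least one voltage source, the identity block $-I$ is nonzero. So the input $v_S$ enters the algebraic constraint $A_V^\top e=v_S$, and $K_E B=0$ fails.

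\textbf{Step 2 (voltage sources vs.\ version (b)).} Compute
\[
K_E J=\begin{pmatrix}0&-K_C A_L&-K_C A_V\\ 0&0&0\\ A_V^\top&0&0\end{pmatrix}.
\]
The last block row contains $A_V^\top$. This block is nonzero whenever a voltage source is present, because every column of an incidence matrix of a branch has a nonzero entry. So $K_E J\neq 0$. Together with Step 1, this shows that both versions exclude networks with voltage sources.

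\textbf{Step 3 (version (a) fails for every index-1 MNA model).} After Step 1 we may assume there are no voltage sources, so $E=\mathrm{blkdiag}(A_C C A_C^\top, L)$ and the algebraic part is $K_C$ acting on $e$. If $K_C=0$, i.e.\ $A_C^\top$ has full rank, the system is an ODE and not an index-1 DAE. So assume $K_C\neq0$; I will show $K_C A_G G A_G^\top\neq 0$ or $K_C A_I\neq0$. Suppose both vanish. The $K_C$-projected equations then reduce to $0=-K_C A_L\imath_L$, which contains no $e$-component. The equation is singular in the $K_C e$ direction, so these node potentials are undetermined and the pencil is not regular, which contradicts index 1.

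Topologically, this says that if a node subset is not reached by capacitors, resistors or current sources, it is connected only through inductors. That forms an $L$-cutset and hence leads to index 2 or a singular pencil. So for index $\le1$, some $R$ or $B$ contribution must enter the algebraic part, and version (a) is violated.

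\textbf{Main obstacle.} The delicate step is making the contradiction in Step 3 precise. I intend to use the regularity/index-1 characterization from \cite{Tischendorf98}: index 1 without voltage sources requires $\ker\bigl(A_C,A_G,A_L\bigr)^\top$ trivial and no $LI$-cutset. From this I will derive that $K_C^\top A_G$ or $K_C^\top A_I$ is nonzero. This needs $G>0$ so that $K_C A_G G A_G^\top=0$ implies $K_C A_G=0$; I will take $K_C$ to be an orthogonal projector, so that $K_C A_G G A_G^\top K_C=0$ forces $K_CA_G=0$.
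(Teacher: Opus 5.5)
Your proposal is correct and follows essentially the paper's route. The $-I$ block of $B$ and the $A_V^\top$ block of $J$ exclude voltage sources for versions (a) and (b). For version (a), your topological paragraph (node sets not connected to ground through capacitors, resistors and current sources are attached only via inductors, which gives an $L$-, hence $LI$-cutset) is the paper's argument via $\text{rank}(A_C)=\text{rank}(A_C,A_G,A_I)<n$.

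One correction: your Step~3 claim that the pencil is not regular is false in general. For a connected network under your hypotheses the pencil is regular of index~2, because the components $K_C e$ are fixed by the hidden constraint $K_C A_L L^{-1}A_L^\top e=0$. So conclude ``not index~1'' directly, for example from $\{0\}\neq\text{im}\,K_C\subseteq\ker(A_C,A_G)^\top$, as in your last paragraph. This inclusion already follows from $K_C A_G=0$ alone.
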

\begin{proof}
We consider the cases (a) and (b) for the JR-decomposition: % 
\begin{itemize}
    \item[(a)] Here, the source term $Bu$ only contributes to the differential part. This excludes voltage sources $v_S(t)$, since $v_S(t)$ would appear in an algebraic equation. Due to the fact that $R$ also only contributes to the differential part, we have ${A_G}^\top K_C=0$ (resistors are in the differential part) and $K_C^\top A_I =0$ (current sources in the differential part).  This means that the terminals of any resistor and any current source are connected to a path of capacitors. Therefore, rank$(A_C)=\text{rank}(A_C, {A_G}, A_I)$ (i.e., the number of connected units is equal in both cases, subcircuit with only capacitors and a subcircuit formed by capacitors, resistors and current sources). To have a DAE instead of an ODE, it is necessary to hold: $\text{rank}(A_C) < n$. This means that there is more than one connected unit in the subsystem $(A_C,{A_G},A_I)$. Hence, the inductors must form a cutset. This immediately results in a pH-DAE of index-2. Thus, there is no type (a) JR-decomposition of MNA equations of index-1.
    \item[(b)]  Here, the conserved part contributes only to the differential  equations; this again excludes the voltage sources, see $J$-part in \eqref{eq:mna-model}. 
    %The system in this case has at most index-2. 
    \vspace*{-1.5\baselineskip}
\end{itemize}
\end{proof}
%----------------------------------------------------------------
\subsection{Enhanced JR-decomposition}
\label{subsec:enhanced-JR}
%----------------------------------------------------------------
As we have seen in Section~\ref{sec:circuits-and-JRsplitting}, the assumptions required for the JR-decomposition for circuits in MNA are somewhat restrictive, e.g. no voltage sources can be present. To relax these assumptions and extend the method to a broader class of problems, we introduce for the MNA equations~\eqref{eq:mna-model}
\begin{subequations}\label{eq:enJR}
\begin{equation}
    J_{en} = \begin{pmatrix}
        0 & -A_L & 0 \\ A_L^\top & 0 & 0 \\ 0 & 0 & 0
    \end{pmatrix}, \qquad
    R_{en} = 
    %%  -  % change reverted by AB
    \begin{pmatrix}
        A_G G A_G^\top & 0 & A_V \\
        0 & 0 & 0 \\ -A_V^\top & 0 & 0
    \end{pmatrix}.
\end{equation}
This way, the voltage sources are moved to the $R$-part 
and it gives the decomposition
\begin{equation}
f_1(x)= J_{en}(x), \qquad 
f_2(x)=-R_{en}x+Bu,
\end{equation}
\end{subequations}
which we call \textit{enhanced JR-decomposition}. %This extended approach 
It 
satisfies similar properties: 
\[
  J_{en} \;\text{ is skew-symmetric}, \qquad 
  R_{en} \;\text{ has a positive symmetric part, i.e., }
  \;R_{en}^\top +R_{en} \geq 0.
\]
%i.e.,  $R_{en}^\top +R_{en} \geq 0$.

Again, for consistency, we accommodate the following two cases:
\begin{enumerate}
    \item[(a)] $R_{en}, B$ contribute only to the differential part; 
    
    \item[(b)] $J_{en}$ contributes only to the differential part. 
\end{enumerate}
\begin{proposition}\label{prop:index-one-enJR}
    We consider an index-1 pH-DAE %system ~\eqref{eq:pHS-splitting-basis} 
    with enhanced JR-decomposition~\eqref{eq:enJR} and either case (a) or (b). Then, any splitting method of order $p$ (for ODEs) applied to the enhanced JR %-decomposition 
    gives an order $p$ scheme also for the index-1 DAE case.
\end{proposition}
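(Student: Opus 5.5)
The plan is to reduce the index-1 DAE, in each of the cases (a) and (b), to an ODE for the differential components, with the algebraic components given explicitly as a linear function of the differential ones (and of $u$). We then check that the splitting of the DAE coincides with an ODE splitting of this inherent ODE. The order-$p$ result for ODE splittings then carries over directly. This follows the dissection idea of \cite{Jansen2015A} and the argument of \cite{Bartel_2025} for the classical JR-decomposition. The only new point is that $R_{en}$ is no longer symmetric, and the ODE splitting theory never uses symmetry.

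\textbf{Decomposition.} Let $Q=K_E$ be the projector onto $\ker E$ and $P=I-Q$. Write $x=Px+Qx=:x_d+x_a$. Multiplying \eqref{eq:pHDAE} by a suitable projector $W$ onto $\ker E^\top$ (which equals $\ker E$ by symmetry of $E$) separates differential and algebraic equations. The index-1 assumption means that the matrix $\bigl(W(J_{en}-R_{en})Q\bigr)$ restricted to $\mathrm{im}\,Q$ is invertible; for MNA this is the standard condition of having no $CV^{+}$-loops and no $LI$-cutsets. The algebraic part therefore reads $0=W(J_{en}-R_{en})(x_d+x_a)+WBu$. Solving it gives $x_a=\Phi x_d+\Psi u$, and substituting yields an inherent ODE $\dot x_d=F(x_d,u)$.

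\textbf{Case (a).} Here $R_{en}$ and $B$ act only on the differential part, i.e.\ $WR_{en}=0$ and $WB=0$. The algebraic constraint $0=WJ_{en}x$ then involves only $f_1$. In the $f_2$-subsystem the algebraic variables do not enter the dynamics at all, because $R_{en}Q$ and the relevant coupling vanish in this case. The algebraic variables may therefore be held fixed, or recomputed from the single constraint, so the constraint is not doubled. The $f_1$-subsystem is an index-1 DAE whose inherent ODE is $\dot x_d = P E^{+}J_{en}(x_d+\Phi_1 x_d)$. The full inherent ODE $F$ is then the sum of the two subsystem vector fields, $F=F_1+F_2$.

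\textbf{Case (b).} This is symmetric to (a): $WJ_{en}=0$, so the constraint, now containing the voltage-source rows $-A_V^\top e=-v_S$ which sit in $R_{en}$ and $B$, belongs entirely to the $f_2$-subsystem. Again $F=F_1+F_2$.

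In both cases the sum property $F=F_1+F_2$ is what must be checked. The flows of the subsystems restricted to the constraint manifold are exactly the flows of $F_1$ and $F_2$. Any splitting scheme of order $p$ (Lie, Strang, and higher-order compositions) applied to the DAE subsystems therefore produces the same $x_d$-iterates as the ODE splitting applied to $\dot x_d=F_1+F_2$. It is thus of order $p$ for $x_d$ by classical ODE theory, e.g.\ via BCH or Lie-derivative arguments. The algebraic components $x_a=\Phi x_d+\Psi u$ are obtained from the constraint at the output time by a linear, bounded map. Hence their error is also $\mathcal O(h^p)$. With a time-dependent $u$, one handles the source by autonomizing, adding $t$ as a state in the $f_2$ flow, which preserves the order.

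\textbf{Main obstacle.} The delicate step is showing that the constraint appears in only one subsystem and that the other subsystem's flow keeps the iterate consistent, or at least leaves $x_d$ unaffected by $x_a$. This must be verified concretely from the block structure of $J_{en}$, $R_{en}$, $B$ in \eqref{eq:enJR} with $K_E=\mathrm{blkdiag}(K_C,0,I)$. In particular, in case (b) the rows $A_V^\top$ of $R_{en}$ and the $K_C$-part of $A_GGA_G^\top$ must jointly give an invertible algebraic block. I would first try to verify this via the topological index-1 criteria (no $CV^{+}$-loop, no $LI$-cutset), mirroring the rank arguments of the Lemma. I would then show that, once consistency is ensured, the coupling terms $PE^{+}(\cdot)Q$ vanish in the subsystem without the constraint.
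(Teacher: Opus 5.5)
Your proposal is correct and follows the paper's argument. The paper's proof says the algebraic constraint is not split in cases (a)/(b), and that convergence then follows as for the classical JR-decomposition in \cite{Bartel_2025}; your reduction to an inherent ODE with $F=F_1+F_2$ is that argument worked out in more detail than the paper's sketch.

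The obstacle you flag closes at once, using the block structure rather than being symmetry-free. In case (b), take $W=K_E^\top$; skew-symmetry turns $WJ_{en}=0$ into $J_{en}K_E=0$ (concretely $K_C^\top A_L=0$). Hence the $f_1$-flow ignores $x_a$, and $W(J_{en}-R_{en})K_E=-WR_{en}K_E$ is exactly the algebraic block of the $f_2$-subsystem, so no topological argument is needed. In case (a), the third block rows of $WR_{en}$ and $WB$ force the absence of voltage sources. Then $R_{en}=R$ is symmetric and $R_{en}K_E=0$ follows likewise; by the Lemma, this case is in fact empty for index-1 MNA.
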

\begin{proof}
    To maintain the convergence order in the index-1 case, it is crucial to keep the constraints fixed, i.e., one must not split algebraic constraints. This is guaranteed in cases (a) and (b). 
    Then, the convergence result is obtained in a similar way as 
    for the classical JR-decomposition~\cite{Bartel_2025}.
\end{proof}

\begin{remark}
The enhanced JR-decomposition can handle voltage sources.
\end{remark}
%For an electric circuit modeled by MNA, we can handle the inclusion of the voltage sources by removing the $V$-associated incidence matrix $A_V$ from the energy-preserving part. 
 %   In return this results in the enhanced pH-DAE model:
 %   \begin{align}
 % \label{eq:pHDAE_enhanced}
 %   E \dot x &= 
 %   %(J_{en}\textcolor{blue}{+}R_{en})x + Bu(t), \quad x(0)=x_0 
 %   (J_{en} - R_{en})x + Bu(t), \quad x(0)=x_0  %change reverted by AB
 %   \\
 %   y &= B^\top x \nonumber.
%\end{align}

%----------------------------------------------------------------
\subsection{Convergence Theory}
\label{subsec:conv}
%----------------------------------------------------------------
For the enhance JR-decomposition, we have the following 
%In what follows, we give the general convergence theorem for the resulting numerical scheme using operator splitting. 
\begin{theorem}
      Let an index-1 MNA model be decomposed using the enhanced JR-decomposition and satisfy the conditions of either case (a) or case (b). Consider the application of the %second-order 
      Strang splitting scheme 
      to the resulting pH-DAE system. Assume that the underlying time-integration method is of (at least) second order. Then the Strang splitting scheme achieves second-order convergence for both the differential and algebraic components of the solution.
      % \begin{enumerate}
      %     \item If the considered pH-DAE is of index 1, then the Strang splitting scheme achieves second-order convergence for both the differential and algebraic components of the solution.
      %     \item If the considered pH-DAE is of index 2, then the second-order convergence of the Strang splitting is retained only when the system is formulated using the loop-cutset formulation.
      % \end{enumerate}
\end{theorem}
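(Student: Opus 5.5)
The plan is to reduce the statement to Proposition~\ref{prop:index-one-enJR} with $p=2$. Two things need separate treatment: the second-order error of the integrators used inside the substeps, and the algebraic components. I would begin by applying the dissection approach of \cite{Jansen2015A} to the index-1 MNA system with $J_{en}$ and $R_{en}$ from \eqref{eq:enJR}. Using the projector $K_E=\text{blkdiag}(K_C,0,I)$ and its complement, the state splits as $x = P x + K_E x$, so the system becomes an inherent ODE for the differential components $Px$ (containing $P_C e$ and $\imath_L$) together with algebraic equations for $K_C e$ and $\imath_V$. The index-1 assumption (no $CV^{+}$-loops and no $LI$-cutsets, \cite{Tischendorf98}) makes the Jacobian of the algebraic part with respect to the algebraic variables nonsingular. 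Hence the algebraic components are given explicitly as $z = \Phi(Px, u(t))$, with $\Phi$ linear in $Px$ and smooth in $t$.

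Next I check that in case (a) and in case (b) the splitting never separates the algebraic constraints.
\begin{itemize}
\item In case (a), $R_{en}$ and $B$ act only on the differential rows. Then the $J_{en}$ subflow carries the whole algebraic constraint, and the $R_{en}$ subflow is a pure ODE in $Px$ with $z$ frozen.
\item In case (b), the roles are swapped: the $R_{en}$ subflow, which contains $A_V$, the resistors and the sources, carries all algebraic equations. This includes the voltage-source constraint $A_V^\top e = v_S$.
\end{itemize}
In both cases each subproblem is itself an index-1 DAE, or an ODE, whose constraint is the full original constraint, and both share the same manifold. So the Strang composition $\varphi^{[1]}_{h/2}\circ\varphi^{[2]}_{h}\circ\varphi^{[1]}_{h/2}$ maps consistent values to consistent values. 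Restricted to the manifold, it is exactly Strang splitting of the inherent ODE $P\dot x = (f_1+f_2)$-projected. The BCH expansion then gives local error $\mathcal{O}(h^3)$ for $Px$, and Lady Windermere's fan with the Lipschitz stability of the linear inherent ODE gives global error $\mathcal{O}(h^2)$. This is the content of Proposition~\ref{prop:index-one-enJR}.

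Second, I replace the exact subflows by a one-step integrator of order at least two, applied to index-1 DAEs. Such a method, for example the implicit midpoint rule for the conservative part, has local error $\mathcal{O}(h^3)$ on each substep when applied to the inherent ODE with the constraint imposed exactly at the stage values. Adding these errors to the splitting error keeps the local error at $\mathcal{O}(h^3)$, and the same stability argument yields global order two for $Px$. For the algebraic components, I use that the numerical $z_n$ satisfies the constraint at the endpoint of each step. Then $z_n - z(t_n) = \Phi(Px_n - Px(t_n), 0)$, so the linear relation $\Phi$ transfers the $\mathcal{O}(h^2)$ bound directly to $z$.

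I expect the main obstacle to be this last point in case (a). There the algebraic variables are updated only in the $J_{en}$ substeps, while the inputs $u(t)$ enter the $R_{en}$ substep, and one must check that the value of $z$ after the half step is evaluated consistently with the correct time $t_n+h$ rather than a stale input time. I would handle it by defining the $J_{en}$ subproblem with the constraint evaluated at the current substep endpoint time, treating $u$ as a time-dependent but frozen parameter. I would then verify, with the projector calculus above, that no $\mathcal{O}(h)$ inconsistency is introduced, as in the classical JR-analysis of \cite{Bartel_2025}.
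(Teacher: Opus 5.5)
Your route is the paper's. The paper obtains the theorem from Proposition~\ref{prop:index-one-enJR} with $p=2$, whose proof only says that in cases (a)/(b) no constraint is split, so the ODE order carries over as in \cite{Bartel_2025}. Your inherent-ODE and Lady Windermere argument expands exactly this.

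The gap is in the step for the algebraic components. You claim that both subproblems carry the full constraint and share one manifold, so that the Strang composition maps consistent values to consistent values. You then use $z_n-z(t_n)=\Phi(Px_n-Px(t_n),0)$. The claim is false, and it contradicts your own remark that $z$ is frozen in the constraint-free subflow. In case (b), $K_E^\top J_{en}=0$ and hence, by skew-symmetry, $J_{en}K_E=0$, so $(\lambda E-J_{en})K_E=0$ for every $\lambda$. The subproblem $E\dot x=J_{en}x$ therefore has a singular pencil, imposes no constraint, and leaves $z$ undetermined. After a $J_{en}$-substep the differential part has moved but $z$ has not. For example, with the ordering $\varphi^{J}_{h/2}\circ\varphi^{R}_{h}\circ\varphi^{J}_{h/2}$, the returned $z_{n+1}$ matches the differential state after the $R_{en}$-step, which differs from $Px_{n+1}$ by $O(h)$. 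Generically this gives only $z_{n+1}-z(t_{n+1})=O(h)$.

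Your argument for the differential components survives without a common manifold:
\begin{itemize}
\item the $J_{en}$-flow never sees $z$, because $J_{en}K_E=0$;
\item the $R_{en}$-subproblem carries the full constraint and inherits the index-1 property, since the index-1 matrix $E-(J_{en}-R_{en})K_E$ of the full system equals $E+R_{en}K_E$, the one of the subproblem;
\item so $z$ is re-determined inside that substep, and the $Px$-iteration is exactly Strang splitting of the inherent ODE.
\end{itemize}
For the algebraic components, endpoint consistency has to be built into the scheme. One option is the post-processing $z_{n+1}=\Phi(Px_{n+1},u(t_{n+1}))$. Another is to put the $R_{en}$-flow last with an integrator that enforces the constraint at the step end. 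Only then does the $O(h^2)$ bound transfer through $\Phi$.

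Your \emph{main obstacle} is also placed in the wrong case. For index-1 MNA models, case (a) is empty. $K_E^\top B=0$ forces the absence of voltage sources, since the last block row of $B$ is $(0,-I)$. Then $J_{en}=J$ and $R_{en}=R$, and the Lemma excludes (a). Moreover, in (a) the constraint is $K_E^\top J_{en}x=0$, which contains no input at all. The input-timing issue arises in case (b), where $v_S(t)$ enters the constraint carried by the $R_{en}$-flow, and it is settled by the same recomputation at $t_{n+1}$.
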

\begin{proof}
    For the index-1 case, the enhanced JR-decomposition produces a valid operator splitting into two solvable subsystems and the above stated Proposition~\ref{prop:index-one-enJR} gives the convergence order 2 for Strang splitting.
\end{proof}
Next, we apply the proposed method to a numerical example, demonstrating its convergence and practical performance.

%----------------------------------------------------------------
\section{Numerical Results}
\label{sec:numerics}
%----------------------------------------------------------------
We consider an electric circuit, which is roughly build of two short and coupled transmission lines with an input $v(t)$ and a load resistance $R_L$, see Fig.~\ref{fig:transmission-circuit-modified}. For all parameters, see caption of Fig.~\ref{fig:transmission-circuit-modified}. The unknown state variables are the node potentials $e_i,\; i=1,\, \dotsc,\,  6$, the currents (through inductors) $\imath_1, \, \imath_2$ and the current through the voltage source $\imath_V$. Modeling the electric network via modified nodal analysis yields a pH-DAE of index-1 for $x^\top = (e_1,e_2,e_3,e_4,e_5,e_6,\imath_1, \imath_2, \imath_v)$.
%%%%%%%%%%
%\begin{figure}[hbt]
%  \centering

\noindent
\begin{minipage}{0.6\textwidth}
	\begin{circuitikz}[scale=.6]
	\draw[fill=black] (0,0) ellipse (.075 and .075) node[below,yshift=+0.01cm]{$e_{1}$};
	\draw (0,3) to[V, l_=$v(t)$,i<_=$\imath_V $] (0,0);
	\draw[fill=black] (0,3) ellipse (.075 and .075) node[above,yshift=-0.01cm]{$e_{4}$};
	\draw (0,3) to[R, l=$R_0$] (3,3);
	\draw (3,3) to[L, l=$L_2$,i>^=$\imath_{2}$] (6,3);
	\draw (0,0) to[R, l=$R_0$] (3,0);
	\draw (3,0) to[L, l=$L_1$,i>_=$\imath_{1}$] (6,0);
	\draw[fill=black] (3,0) ellipse (.075 and .075) node[above,yshift=0.01cm]{$e_{2}$};
	\draw (3,-3) to[C, l=$C_R$] (3,0);
	\draw (3,3) to[C, l=$C_R$] (3,6);
	\draw[fill=black] (3,3) ellipse (.075 and .075) node[below,yshift=-0.01cm]{$e_{5}$};
	\draw (6,3) to[C, l=$C$] (6,0);
	\draw (8,3) to[R, l=$R_L$] (8,0);
	\draw (3,-3) -- (10,-3) -- (10,6) -- (3,6);
	\draw (6,0) -- (8,0);
	\draw (6,3) -- (8,3);
	\draw[fill=black] (7,0) ellipse (.075 and .075) node[above,yshift=0.01cm]{$e_{3}$};
	\draw[fill=black] (7,3) ellipse (.075 and .075) node[below,yshift=-0.01cm]{$e_{6}$};
	\draw (7,-3) to[C, l=$C_R$] (7,0);
	\draw (7,3) to[C, l=$C_R$] (7,6);
	\begin{scope}[yshift=-3cm]
	\draw (3.,0) -- (3.,-0.6);
	\draw (2.6,-.6) -- (3.4,-.6);
	\draw (2.75,-.7) -- (3.25,-.7);
	\draw (2.9,-.8) -- (3.1,-.8);
	\end{scope}
	\draw[white] (10.0,7.5) ellipse (.01 and .01);
	\end{circuitikz}
    \end{minipage}\quad
    \begin{minipage}{0.35\textwidth}
    {\small\textbf{Fig.~\refstepcounter{figure}\label{fig:transmission-circuit-modified}\ref{fig:transmission-circuit-modified}}     Index-1 circuit with voltage  
    source $v(t)=0.5 \sin(2\cdot10^7 t)$~V, $t\in \mathbb{I} = [0,6\cdot 10^{-7}]$ and initial value $x(0)=0$. Parameters: $C_R = 10^{-10}$~F, $C =10^{-9}$~F,  $R_0=0.1\,\Omega$,  $R_L=10\,\Omega$, $L_1= 10^{-6}$~H and $L_2=5\cdot 10^{-7}$~H. 
    }
    \end{minipage}
	%\caption{%
    %Index-1 circuit with voltage  
    %source $v(t)=0.5 \sin(2\cdot10^7 t)$~V, $t\in \mathbb{I} = [0,6\cdot 10^{-7}]$ and initial value $x(0)=0$. Parameters: $C_R = 10^{-10}$~F, $C =10^{-9}$~F,  $R_0=0.1\,\Omega$,  $R_L=10\,\Omega$, $L_1= 10^{-6}$~H and $L_2=5\cdot 10^{-7}$~H. 
   % \label{fig:transmission-circuit-modified}}
%\end{figure}
%%%%%%%%%%%%%%%%%%%%%%
\medskip

\noindent
In the top row of Fig.~\ref{fig:example}, we observe the convergence behavior of the Strang splitting for the enhanced JR.
%variant as in the preceding section. 
The differential variables, in both the 3-mid and iE-mid-eE variants, display consistent second-order convergence. The 2-stage Radau IIA method (3-Radau) shows second-order convergence for both differential and algebraic variables, which aligns with its theoretical order (for index-1 DAEs) when combined with Strang splitting. The non-symmetric variant iE-mid-iE shows only first order as expected. For comparison, we also solve the original (non-split) system using the classical backward differentiation formula of order two (BDF2), a widely used method in circuit simulation. The results indicate that BDF2 exhibits the same second-order convergence behavior as the 3-mid and 3-Radau variants, although with a larger error constant for the differentiable variables. In contrast, the iE-mid-iE method exhibits a clear first-order convergence, as expected from the sole use of the implicit Euler scheme in solving the $J_{en}$-subsystem. For the algebraic variables, 3-mid and iE-mid-eE show second order, which is in line with theory. In the bottom row of Figure~\ref{fig:example}, we observe the satisfaction of the dissipation inequality for Strang splitting with 3-mid variant (the red line giving the dissipation restriction to non-negative values), as well as the 3-Radau variant with smaller time step sizes $\lesssim 10^{-12}$. However, we observe a violation of the dissipation inequality when using the 3-Radau variant with step sizes $\geq 10^{-10}$. This is due to the structure-preserving property of the implicit midpoint rule for the conserved part; which is not the case for the Radau method. Moreover, we investigate the dissipation inequality for the solution variants mid-iE-mid and mid-Radau-mid with time steps $10^{-12}$ and $10^{-11}$, respectively. The results indicate that both approaches capture the dissipative behavior of the system and remain close to the reference dissipation. In particular, the mid-Radau-mid scheme shows an improved approximation of the dissipation compared to the 3-Radau method with a fixed step size. This can be explained by the fact that, each subsystem in the splitting framework is discretized using a method adapted to its structural properties. This structure-aware choice allows for a more accurate reproduction of the energy balance, for larger step sizes.
\begin{figure}[ht]
    \centering
    \includegraphics[width=0.475\linewidth]{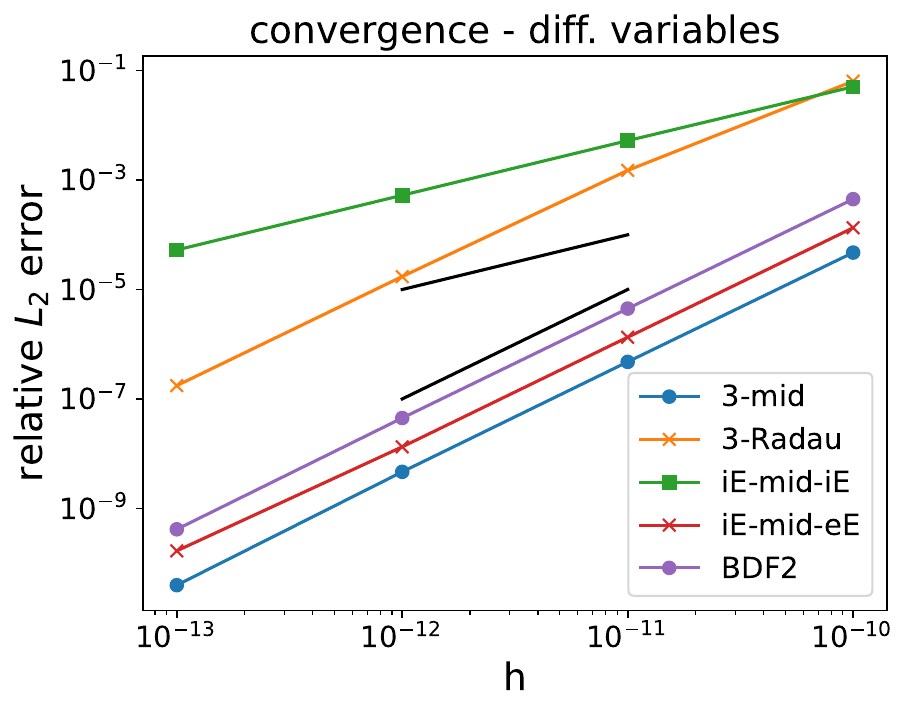}
    \includegraphics[width=0.475\linewidth]{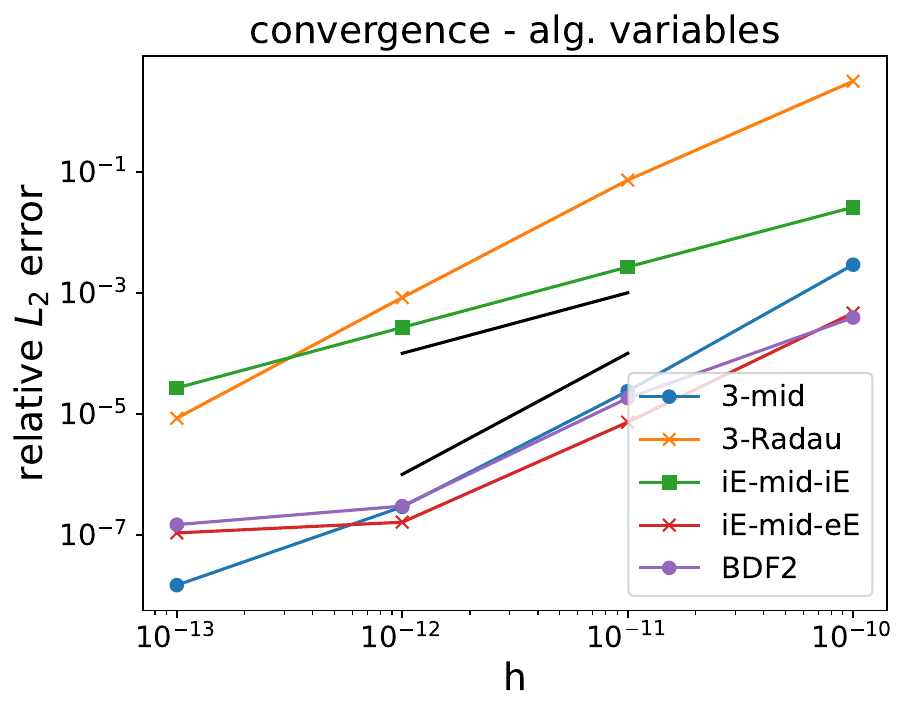}
    \includegraphics[width=0.475\linewidth]{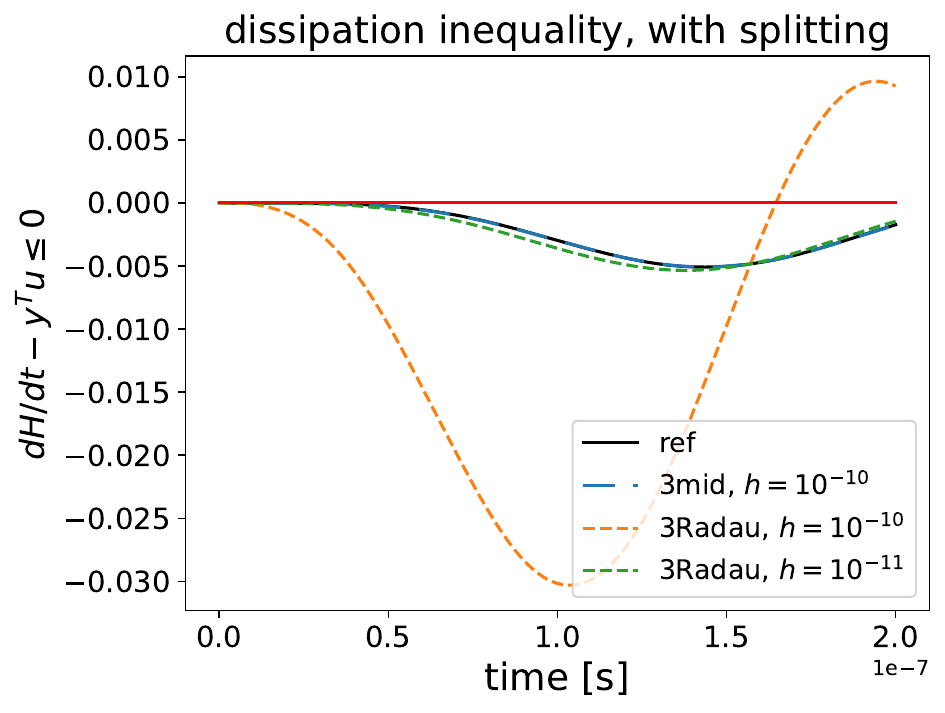}
    \includegraphics[width=0.475\linewidth]{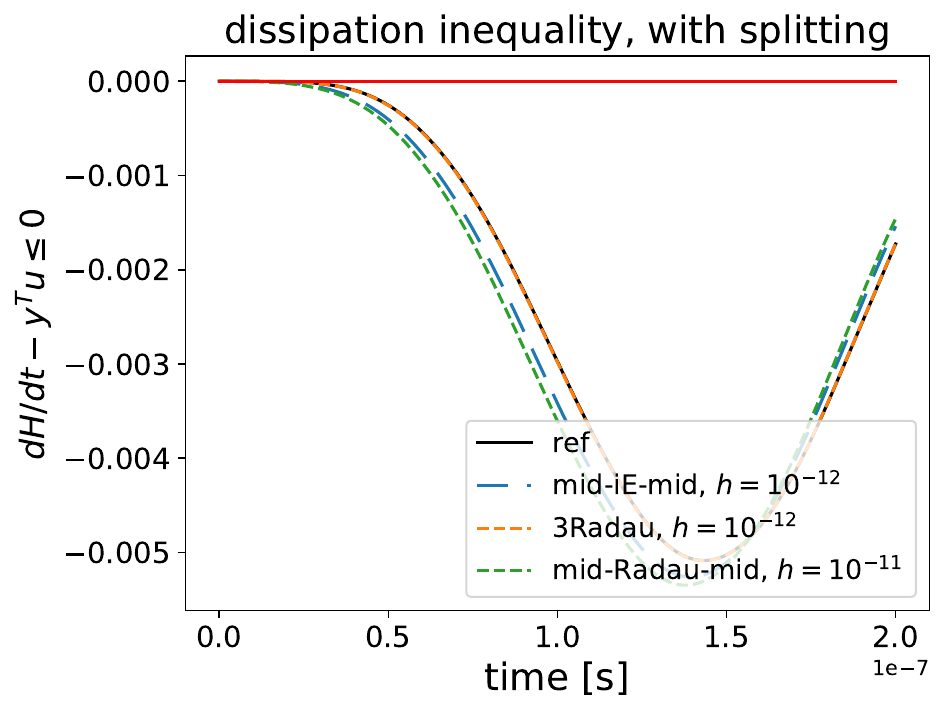}
    \caption{Top: Strang Splitting convergence behavior for the differential and the algebraic variables using 3mid, 3Radau, iE.mid-iE and iE-mid-eE. 
    Non-split BDF2 is depicted only for comparison. Bottom: The verification of the dissipation inequality for Strang splitting combined with different solution variants.
    }
    \label{fig:example}
\end{figure}

% \begin{figure}[t]
% \sidecaption[t]
% \includegraphics{figure}
% \Description{This is figure Alt-Text for Figure grg2.}
% \caption{If the width of the figure is less than 7.8 cm use the \texttt{sidecaption} command to flush the caption on the left side of the page. If the figure is positioned at the top of the page, align the sidecaption with the top of the figure -- to achieve this you simply need to use the optional argument \texttt{[t]} with the \texttt{sidecaption} command}
% \label{fig:2}       % Give a unique label
% \end{figure}

% % For tables use
% %
% \begin{table}[!t]
% \caption{Please write your table caption here}
% \label{tab:1}       % Give a unique label
% %
% % Follow this input for your own table layout
% %
% \begin{tabular}{p{2cm}p{2.4cm}p{2cm}p{4.9cm}}
% \hline\noalign{\smallskip}
% Classes & Subclass & Length & Action Mechanism  \\
% \noalign{\smallskip}\svhline\noalign{\smallskip}
% Translation & mRNA$^a$  & 22 (19--25) & Translation repression, mRNA cleavage\\
% Translation & mRNA cleavage & 21 & mRNA cleavage\\
% Translation & mRNA  & 21--22 & mRNA cleavage\\
% Translation & mRNA  & 24--26 & Histone and DNA Modification\\
% \noalign{\smallskip}\hline\noalign{\smallskip}
% \end{tabular}
% $^a$ Table foot note (with superscript)
% \end{table}
% %
%----------------------------------------------------------------
\section{Conclusions}\label{sec:conclusions}
%----------------------------------------------------------------
We considered  the applicability of JR-decomposition-based operator splitting to electric circuit systems in the MNA modeling framework. We highlighted the limitations of the classical JR-splitting when applied to certain circuit topologies. To overcome these limitations, we proposed an enhanced version of the JR-decomposition, which extends the applicability of the splitting technique to a wider range of circuits, including those with voltage sources. A numerical experiment has been conducted to assess the convergence orders, which confirm the theoretical findings.

%%%%%%%%%%%%%%%%%%%%%%%%%%%%%%%%%%%%%%%%%%%
%%%%%%%%%%%%%%%%%%%%%%%%%%%%%%%%%%%%%%%%%%%
\begin{acknowledgement}
This work was funded by the Deutsche Forschungsgemeinschaft (DFG, German Research Foundation) – Project-ID 531152215 – CRC 1701.
\end{acknowledgement}
\ethics{Competing Interests}{The authors declare no competing interests.}

\end{document}